\documentclass{amsart}
\usepackage{amsthm,amsmath,amsfonts,amssymb,enumerate,array,multirow,booktabs,stmaryrd,esint}
\usepackage{hyperref}
\newtheorem{thm}{Theorem}[section]
\newtheorem{lemma}[thm]{Lemma}
\newtheorem{corol}[thm]{Corollary}
\newtheorem{prop}[thm]{Proposition}

\theoremstyle{definition}
\newtheorem{defi}[thm]{Definition}
\theoremstyle{remark}

\newtheorem{oss}[thm]{Remark}
\newtheorem*{notation}{Notation}

\newcommand{\R}{\mathbb R}
\newcommand{\CP}{\mathbb{CP}}
\newcommand{\lie}{\mathfrak}

\DeclareMathOperator{\Rm}{Rm}
\DeclareMathOperator{\Ric}{Ric}

\DeclareMathOperator{\const}{const}

\renewcommand{\epsilon}{\varepsilon}

\renewcommand{\bar}{\overline}

\newcommand{\lieder}{\mathcal L}

\newcommand{\desharp}{\partial^\sharp}
\newcommand{\de}{\partial}
\newcommand{\debar}{{\bar \partial}}

\DeclareMathOperator{\Hess}{Hess}
\newcommand{\call}{\mathcal}

\title{On Calabi Extremal K\"ahler-Ricci solitons}
\author{Simone Calamai and David Petrecca}
\address{(S.~Calamai) Dip. di Matematica e Informatica ``U. Dini'' - Universit\`a di Firenze \endgraf Viale Morgagni 67A -  Firenze - Italy}
\email{simocala at gmail.com}
\address{(D.~Petrecca) Dipartimento di Matematica - Universit\`a di Pisa \endgraf Largo Pontecorvo 5 -  Pisa - Italy}
\email{petrecca at mail.dm.unipi.it}

\begin{document}

\begin{abstract}
In this note we give a characterization of  K\"ahler metrics which are both Calabi extremal
and K\"ahler-Ricci solitons in terms of complex Hessians and the Riemann curvature tensor.  
We apply it to prove that, under the assumption of positivity of the holomorphic sectional curvature, 
these metrics are Einstein.
\end{abstract}
\maketitle
\section*{Introduction}
Let $M^{2n}$ be a compact complex manifold.
A K\"ahler metric $g$ on $M$ is said to be \emph{K\"ahler-Einstein} if it is
Einstein as a Riemannian metric, i.e. it is proportional to its Ricci tensor or,
equivalently, if there exists $c \in \R$ such that
\begin{equation}\label{KEeq}
\rho_g = c \omega_g
\end{equation}
where $\rho_g$ (resp. $\omega_g$) denotes the Ricci form (resp. K\"ahler form)
of $g$.

There are two possible generalizations of this notion.
The first is the notion of \emph{extremal metric} introduced by  Calabi in
\cite{calabi1, calabi2} (see also \cite{gaud}) in the attempt to find a
canonical representative in a given K\"ahler class $ \Omega \in H^{1,1}(M) \cap
H^2(M, \R)$. These metrics are defined to be the critical points of the
Riemannian functional $\call M_\Omega \rightarrow \R$ defined by
\[
g \longmapsto \int_M s_g^2 \omega^n
\]
where $\call M_\Omega$ is the space of the K\"ahler metrics on $M$ in the class
$\Omega$ and $s_g$ is the scalar curvature of $g$.
He also showed that a metric is extremal if, and only if, the gradient of $s_g$
is a holomorphic vector field. 
Constant scalar curvature K\"ahler metrics (cscK), hence in particular
K\"ahler-Einstein, are examples of extremal metrics, but there are extremal
metrics of non constant scalar curvature (see again \cite{calabi1}).

Another direction to generalize the Einstein condition \eqref{KEeq} is the
following.
A K\"ahler metric $g$ is called a \emph{K\"ahler-Ricci soliton} (KRS) if there
exist $c \in \R$ and a holomorphic vector field $X$ such that
\begin{equation}
\rho_g +c \omega_g = \lieder_X \omega_g. \nonumber
\end{equation}
These metrics give rise to special solutions of the K\"ahler-Ricci flow (see e.g.
\cite{chow}), namely they evolve under biholomorphisms.
It is known that on a compact manifold, if $c \geq 0$ then $g$ is Einstein (see
e.g. again \cite{chow}), so in the compact K\"ahler case one only considers the
so-called \emph{shrinking} K\"ahler-Ricci solitons ($c<0$) whose equation, after
a scaling, can be written as
\begin{equation}\label{KRSeq}
\rho_g - \omega_g = \lieder_X \omega_g.
\end{equation}
The Hodge decomposition for the dual of $X$ allows us to introduce a holomorphy
potential  with respect to $g$, i.e. a complex-valued function $\theta_X$ such that $\iota_X
\omega_g = i \debar \theta_X$. By means of this function we can infer that the
K\"ahler form $\omega_g$ belongs to $2 \pi c_1(M)$, making $M$ a Fano manifold.

The first examples of non-Einstein compact K\"ahler-Ricci solitons go back to
the constructions of  Koiso \cite{koiso} and independently Cao \cite{cao} of
K\"ahler metrics on certain $\CP^1$-bundles over $\CP^n$.
Koiso himself remarks that this  K\"ahler-Ricci soliton metric is not Calabi
extremal and proves that if it were, it would be Einstein.

There is a class of manifolds for which there are existence results for both kinds of metrics, namely
toric manifolds (see e.g. \cite{abreu}). For extremal metrics we mention for
instance the existence result of  Donaldson \cite{donaldson_ext} for toric
surfaces. For the KRS we refer to the existence result, in all dimensions, of
Wang and  Zhu \cite{wz}. Finally, the existence of 
a K\"ahler-Einstein metric on a compact Fano
manifold is equivalent to the notion of \emph{K-stability}
stated by  Chen,  Donaldson and  Sun in \cite{songKE} and subsequent papers.

It is then natural to ask what happens when a metric generalizes a K\"ahler-Einstein metric in both these ways. 
In Theorem \ref{thmprinc} we prove, under the assumption of positivity of the holomorphic sectional curvature, 
that an extremal KRS is in fact K\"ahler-Einstein.

It is not too restrictive to assume positive holomorphic sectional curvature provided it does not exceed a certain numerical bound. Indeed it has been proved by Futaki and Sano \cite{futakisano} that if the diameter of a Ricci soliton is $< \frac{10}{13} \pi$ then the soliton is trivial.
On the other hand, Tsukamoto \cite{tsukamoto} proved that if a K\"ahler metric $g$ has  holomorphic sectional curvature $> \epsilon$, then the diameter of the manifold is bounded from above by $\frac{\pi}{\sqrt \epsilon}$.

From these results we can infer that if the holomorphic sectional curvature is greater than $ (\frac{13}{10})^2$ then the KRS must be Einstein, so our result is non trivial when the holomorphic sectional curvature does not exceed this number. The authors do not know whether there are any connections between positive holomorphic sectional curvature and the extremality condition or whether the extremality gives conditions on the diameter .

The paper is organized as follows. We start recalling some notation and conventions of K\"ahler geometry. 
This level of detail seems necessary in order to avoid confusion among different conventions. 
We then go on proving, for a K\"ahler-Ricci soliton, the characterization of being extremal in terms 
of the length of the complex Hessian of its  potential function and in terms of certain contractions of the Riemann curvature tensor.
We then use this to prove our main result in Theorem \ref{thmprinc} and we also give a condition about the isometry group of a non-Einstein extremal KRS. We finally make a remark about the replicability of the argument in the Sasakian setting.

\subsection*{Acknowledgements} The authors are grateful to Fabio Podest\`a for suggesting the problem, for his constant advice and support and for his help for a better presentation of this paper. They are also grateful to Xiuxiong Chen for his support and to Weiyong He and Song Sun for their interest and feedback.

\section{Definitions and preliminary results}
\begin{notation}
 Let $(M, g, J)$ be a smooth, compact, without boundary K\"ahler manifold of
 real dimension $2n$, with its Riemannian metric $g$ and compatible integrable complex structure $J$.
 The corresponding K\"ahler form is $\omega (\cdot , \, \cdot )= g(J\cdot , \, \cdot)$. 
 We also denote as $\Ric$ the Riemannian Ricci tensor corresponding to the Riemannian metric $g$;
 and its Ricci form as $\rho (\cdot , \, \cdot) = \Ric (J \cdot, \cdot)$.
We label  $s$ the Riemannian scalar curvature  of the metric $g$. 
We let $\delta$ be adjoint of the  exterior differential $d$ with respect to $g$ 
and $\Delta_d = \delta d + d \delta$ be the $d$-Laplacian
acting on differential forms.  

We let $\sharp$ and $\flat$ denote the musical isomorphisms between fields and $1$-forms.
For a $1$-form $\alpha$ we denote as $|\alpha|^2 =( \alpha ,\, \alpha) = (\alpha^\sharp ,\, \alpha^\sharp)$, 
and as well as $|Z|^2 = (Z,\, Z) = (Z^\flat , \, Z^\flat)$ the metric pairing 
on by means of the Riemannian metric $g$.
Similarly, if a real $(1,1)$-form $\beta$ and a $2$-tensor $B$
correspond each other via $\beta (\cdot , \, \cdot ) = B(J\cdot,\, \cdot)$,
then we have for the metric pairings $|B|^2 =(B,\, B)=2|\beta|^2 = 2(\beta, \, \beta)$.
For example, $|\Ric|^2 =(\Ric,\, \Ric)=2|\rho|^2 = 2(\rho, \, \rho)$.
Notice also that for any smooth real valued function on $M$ there holds $(\omega , dd^c u) = -\Delta_d u$.

Given any tensor $T$ and any vector field $V$ on a smooth manifold, we label as $\lieder_V T$
the Lie derivative of $T$ along $V$.

For any smooth, real valued function $u$ on $M$, we label as $\nabla u$ the Riemannian gradient of $u$;
namely, $\nabla u = (d u )^\sharp$. We also denote its $(1,0)$-part as $\desharp f = \frac 1 2 ( \nabla f - i J \nabla f)$. We let $\Hess u = \frac{1}{2}\lieder_{\nabla u}g$ be the real Hessian of $u$.

We also label as $\lie h(M)$ the algebra of (complex) holomorphic vector fields of $M$.

\end{notation}

The first definition is very classical and tracks  back Hamilton \cite{hamilton}.

\begin{defi}
 \label{KRS} Let $(M, g, J)$ and $\Ric$ as in \emph{Notation}; 
 let $f$ be a smooth, real valued function on $M$.
 We say that $(g,\, f)$ is a K\"ahler-Ricci soliton when the following equation is satisfied
 \begin{align}
  \label{KRSequation}
  \Ric - g = \frac{1}{2}\lieder_{\nabla f} g.
 \end{align}
\end{defi}
\begin{oss}\label{oss:holomorphypotentialofKRShasholomorphicgradient}
It is a general fact that $\nabla f$ is real holomorphic, although this is often stated in the definition. Indeed, equation \eqref{KRSequation} implies that $\nabla_i \nabla_j f = 0$ for all 
 $i,\, j\in \{1, \cdots , \, n\}$.
\end{oss}

The next definition is due to Calabi \cite{calabi1}.

\begin{defi}\label{defi:extremalmetric}
 Let $(M, g, J)$ and $s$ be as in \emph{Notation}.  
 We say that the metric $g$ is \emph{Calabi extremal}, or simply extremal, when the Riemannian gradient of $s$
 is holomorphic, i.e. if  $\debar \de^\sharp s = 0$. 
\end{defi}
In this paper we consider metrics which satisfy both these definitions.
\begin{defi}
 \label{defi:extremalKRS}
 Let $(M,\, g\, J)$ as in \emph{Notation}, and let $(g, f)$ be a K\"ahler-Ricci soliton as in
 Definition \ref{KRS}. Moreover, let $g$ be Calabi extremal as in Definition \ref{defi:extremalmetric}.
 Then, we call $(g, f)$ an extremal K\"ahler-Ricci soliton.
\end{defi}

\begin{oss}
 We chose to label the pairs $(g,\, f)$ in Definition \ref{defi:extremalKRS} as extremal K\"ahler-Ricci
 solitons although a similar name was given by Guan in \cite{guan} to different objects.
\end{oss}

Not all complex valued smooth  functions $v$ on $M$ give rise to holomorphic vector fields. The
ones which do are solutions of the equation $\debar \desharp v = 0$, they lie in
the kernel of the fourth order differential operator $L_g = (\debar \desharp)^*
\debar \desharp$ (see \cite{FutakiBook}).

The presence of an extremal metric gives 
information about the algebra of holomorphic vector fields $\lie h(M)$. Namely
the following theorems hold.

\begin{thm}[{\cite{calabi2, FutakiBook}}] \label{thm:calabi}
Let $g$ be an extremal K\"ahler metric on $M$ with scalar curvature $s$. Then
the Lie algebra $\lie h(M)$ has a semidirect sum decomposition
\begin{equation} \label{deco_extr}
\lie h(M) = \lie a (M) \oplus \lie h'(M),
\end{equation}
where $\lie a(M)$ is the complex Lie subalgebra of $\lie h(M)$ consisting of all
parallel holomorphic vector fields of $M$, and $\lie h'(M)$ is an ideal of $\lie
h(M)$ consisting of the image under $\desharp$ of the kernel of $L_g$.

Moreover $\lie h'(M)$ has a decomposition
\[
\lie h'(M) = \bigoplus_{\lambda \geq 0} \lie h_\lambda(M)
\]
where $[\desharp s , Y] = \lambda  Y$ for any $Y \in \lie h(M)$. Furthermore
the centralizer $\lie h_0(M)$ of $\desharp s$ is the complexification of the Lie
algebra consisting of Killing vector fields of $M$.
\end{thm}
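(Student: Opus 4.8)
The plan is to combine three ingredients: the Hodge theory of the compact K\"ahler manifold $M$, the description of $\ker L_g$ as the space of holomorphy potentials, and the classical fact — valid for every extremal metric — that $J\del s$ is a Killing vector field. First I would establish \eqref{deco_extr} as a splitting of vector spaces. Given $X\in\lie h(M)$, apply the Lichnerowicz decomposition of real-holomorphic vector fields to the real part of $X$ (which is a real, $J$-preserving field), writing its dual $1$-form as harmonic $+$ exact $+$ $d^c$-exact; the harmonic summand is dual to a parallel holomorphic field $X_0\in\lie a(M)$, while the remainder is the real part of $\desharp u$ for some $u\in C^\infty(M,\C)$. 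Since $X-X_0$ is a difference of holomorphic vector fields, $\desharp u=X-X_0$ is holomorphic, so $u\in\ker(\debar\desharp)=\ker L_g$ and $X-X_0\in\lie h'(M):=\desharp(\ker L_g)$. The sum is direct: if $\desharp u$ is parallel then $\de\debar u=0$, hence $u$ is harmonic and $\desharp u=0$. For the ideal property one uses the identity $\del_W\desharp u=\desharp\bigl(W(u)\bigr)$, valid for holomorphic $W$ and any $u$: writing $Y=X_0+\desharp v$ as above, $[X_0,\desharp u]=\del_{X_0}\desharp u=\desharp\bigl(X_0(u)\bigr)$, which is holomorphic and hence lies in $\lie h'(M)$, and $[\desharp v,\desharp u]=\desharp\bigl((\desharp v)(u)\bigr)-\del_{\desharp u}\desharp v=\desharp\bigl((\desharp v)(u)-(\desharp u)(v)\bigr)\in\lie h'(M)$; so $[Y,\desharp u]\in\lie h'(M)$ for every $Y$. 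Since $\lie a(M)$ is abelian (parallel fields commute), the sum is semidirect with $\lie h'(M)$ the ideal.

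Next I would prove the eigenspace decomposition, where extremality enters decisively. By Definition \ref{defi:extremalmetric}, $\xi:=\desharp s$ is holomorphic and $s$ is a holomorphy potential, so $\xi\in\lie h'(M)$. For every holomorphic $Y$ one has $[\overline{\desharp s},Y]=0$ — immediate in local holomorphic coordinates, the two terms of the bracket vanishing separately — so, since $\del s=\desharp s+\overline{\desharp s}$ and $J\del s=i(\desharp s-\overline{\desharp s})$, the operators $\mathrm{ad}_{\del s}$ and $\mathrm{ad}_\xi$, respectively $\mathrm{ad}_{J\del s}$ and $i\,\mathrm{ad}_\xi$, agree on $\lie h(M)$. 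Because $J\del s$ is a Killing field it generates a group of biholomorphic isometries acting on the finite-dimensional space $\lie h(M)$ by unitary transformations for the $L^2$-Hermitian inner product, hence through a compact subgroup of $GL(\lie h(M))$; therefore $\mathrm{ad}_{J\del s}$ is semisimple with purely imaginary spectrum and $\mathrm{ad}_\xi$ is semisimple with real spectrum. Decomposing $\lie h'(M)$ into the eigenspaces $\lie h_\lambda(M)=\{Y:[\xi,Y]=\lambda Y\}$ then yields $\lie h'(M)=\bigoplus_\lambda\lie h_\lambda(M)$; that only $\lambda\ge0$ occur is the delicate point, discussed below.

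For the statement about the centralizer, one inclusion is immediate: a holomorphic Killing field $Z$ satisfies $Z(s)=0$, since the scalar curvature is an isometric invariant, whence $[Z,\del s]=\del\bigl(Z(s)\bigr)=0$ and $[Z,J\del s]=J[Z,\del s]=0$ (using $\lieder_Z J=0$), so $[Z,\xi]=0$; thus the complexification of the algebra of holomorphic Killing fields sits inside $\lie h_0(M)$. For the reverse inclusion one restricts the whole discussion to the subspace of functions invariant under the compact torus generated by $J\del s$, on which $s$ is constant in the relevant (basic) sense; there one is in the constant-scalar-curvature situation, and a Bochner-type argument as in Lichnerowicz--Matsushima shows that the resulting Lie algebra is reductive and coincides with the complexification of its intersection with the Killing fields.

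The step I expect to be the main obstacle is the nonnegativity $\lambda\ge0$. My approach would be to evaluate the eigenvalue equation $[\xi,Y]=\lambda Y$ at a point $p$ where $s$ attains its maximum: there $\xi(p)=0$, and for an eigenvector with $Y(p)\ne0$ one obtains $\lambda\,Y(p)=-A\bigl(Y(p)\bigr)$, where $A$ is the endomorphism of $T^{1,0}_pM$ obtained from $\Hess s$ at $p$ by raising an index, a nonpositive Hermitian operator because $p$ is a maximum of $s$; hence $\lambda\ge0$. Eigenvectors vanishing at $p$ force one to pass to the leading holomorphic jet and to combine the information at a maximum and at a minimum of $s$ — a Bia\l ynicki--Birula type analysis — and carrying this out cleanly, together with the reductivity statement used above, is the technical heart of the argument and is precisely what is done in \cite{calabi2, FutakiBook}.
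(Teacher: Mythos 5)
First, a point of comparison: the paper does not prove Theorem \ref{thm:calabi} at all --- it is quoted as background from \cite{calabi2, FutakiBook} --- so your attempt can only be measured against the classical proofs in those references. Your skeleton (Hodge-theoretic splitting of a holomorphic field into a ``harmonic'' part plus $\desharp u$ with $u\in\ker L_g$, then using that $J\del s$ is Killing to conclude that $\mathrm{ad}_{\desharp s}$ is semisimple with real spectrum, then analyzing the spectrum) is indeed the classical one, and several intermediate claims are correct: $[\bar{\desharp s},Y]=0$ for holomorphic $Y$, the bracket identities showing $\desharp(\ker L_g)$ is an ideal, and the easy inclusion of the complexified Killing algebra into $\lie h_0(M)$.

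There are, however, two genuine gaps. First, in the splitting step you assert that the harmonic summand of the dual $1$-form is dual to a \emph{parallel holomorphic} vector field. On a general compact K\"ahler manifold this is false: harmonic $1$-forms need not be parallel, and their metric duals need not even be holomorphic (already a constant holomorphic field on a complex torus equipped with a non-flat K\"ahler metric has a harmonic part whose dual field is neither parallel nor holomorphic). That the complement of $\desharp(\ker L_g)$ inside $\lie h(M)$ can be realized by parallel fields is itself part of Calabi's theorem and needs the extremal hypothesis, which your argument never invokes at that point. Second, the two assertions carrying the content the paper actually uses later (via Lemma \ref{lemma:commutation} and the corollary on the isometry group) --- that only $\lambda\geq 0$ occurs and that $\lie h_0(M)$ is exactly the complexification of the Killing algebra --- are precisely the ones you leave open: your maximum-point argument fails when the eigenvector vanishes at the maximum of $s$, and the ``Bia\l ynicki--Birula'' and Matsushima-type steps are deferred to the references. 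In \cite{calabi2, FutakiBook} both points come out of a single mechanism absent from your outline: for an extremal metric the nonnegative self-adjoint elliptic operator $L_g$ and its conjugate $\bar L_g$ commute, and their difference acts on holomorphy potentials as differentiation along $\desharp s$; hence $\mathrm{ad}_{\desharp s}$ on $\lie h'(M)$ is realized by the restriction of $\bar L_g$ to $\ker L_g$, which gives $\lambda\geq 0$ at once, while $\ker L_g\cap\ker\bar L_g$ (real and imaginary parts both holomorphy potentials) identifies $\lie h_0(M)$ with the complexified Killing algebra. Without this identity, or a completed substitute for it, the proposal does not establish the parts of the theorem on which the paper relies.
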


In the case of a K\"ahler-Ricci soliton a similar theorem holds. 
\begin{thm}[{\cite{tz_uniq}}] \label{thm:TZ}
 If $g$ is a K\"ahler-Ricci soliton with $(1,0)$-vector field $X$. Then $\lie
h(M)$ admits the decomposition
\begin{equation} \label{deco_KRS}
\lie h(M) = \lie k_0(M) \oplus \bigoplus_{\lambda > 0} \lie k_\lambda(M)\, ,
\end{equation}
where $\lie k_\lambda(M) = \{ Y \in \lie h(M): [X, Y] = \lambda Y \}$. Moreover
the centralizer $\lie k_0(M)$ of $X$ splits as $\lie k_0' \oplus \lie k_0''$
where $\lie k_0'$ is the $\desharp$-image of real functions and $\lie k_0''$ is
the $\desharp$-image of purely imaginary functions.
\end{thm}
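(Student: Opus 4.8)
The plan is to translate everything into holomorphy potentials, where $\operatorname{ad}(X)$ becomes an explicit linear operator on a finite-dimensional function space whose spectrum is controlled by the soliton equation. Since $M$ is Fano, $b_1(M)=0$, so every $Y\in\lie h(M)$ has a holomorphy potential, $Y=\desharp\theta_Y$ with $\iota_Y\omega_g=i\debar\theta_Y$, unique once we impose $\int_M\theta_Y\,d\mu=0$ for the weighted volume $d\mu:=e^{f}\,\omega_g^n$ attached to the soliton; note $\iota_X\omega_g=i\debar f$ (Remark~\ref{oss:holomorphypotentialofKRShasholomorphicgradient} and the Hodge decomposition), so $f$ is itself a holomorphy potential of $X$. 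This realizes $\lie h(M)$ as the finite-dimensional space $\Lambda$ of $\mu$-normalized potentials, namely the mean-zero slice of $\ker L_g$.

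I would then compute $\operatorname{ad}(X)$ in these terms. The potential of a bracket is $\theta_{[Y,Z]}=Y(\theta_Z)-Z(\theta_Y)$ modulo constants; using $Y(\theta_X)=(\desharp\theta_Y)(f)=\bar X(\theta_Y)$, where $\bar X=\overline{\desharp f}$ and $X-\bar X=-iJ\nabla f$, this gives $\theta_{[X,Y]}=(X-\bar X)(\theta_Y)=-i\,(J\nabla f)(\theta_Y)$. Moreover $J\nabla f$ is divergence-free for $\mu$: it is, up to sign, the Hamiltonian vector field of $f$, hence preserves $\omega_g^n$, and $(J\nabla f)(f)=g(\nabla f,J\nabla f)=0$; so the right-hand side is already $\mu$-mean-zero, and under $\lie h(M)\cong\Lambda$ the operator $\operatorname{ad}(X)$ is exactly $\mathcal X\colon u\mapsto -i\,(J\nabla f)(u)$. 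The theorem thus amounts to: (i) $\mathcal X$ is diagonalizable with real nonnegative eigenvalues; (ii) $\ker\mathcal X$ is the $\R$-direct sum of the $\desharp$-images of real and of purely imaginary functions that it contains.

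For the first half of (i) I would use the geometric content of Remark~\ref{oss:holomorphypotentialofKRShasholomorphicgradient}: $\nabla_i\nabla_j f=0$ forces $\Hess f$ to be $J$-invariant, hence $J\nabla f$ is Killing; being also holomorphic (the $J$-image of a real holomorphic field) and fixing $f$, its flow preserves $g$, $J$, $f$, hence $\mu$ and $\Lambda$, and acts on $(\Lambda,\langle\cdot,\cdot\rangle_\mu)$, $\langle u,v\rangle_\mu:=\int_M u\bar v\,d\mu$, by unitaries. Then $\lieder_{J\nabla f}|_\Lambda$ is skew-Hermitian, so $\mathcal X=-i\lieder_{J\nabla f}|_\Lambda$ is Hermitian: diagonalizable, with real eigenvalues, yielding $\lie h(M)=\bigoplus_\lambda\lie k_\lambda(M)$ with $\lie k_\lambda(M)=\ker(\operatorname{ad}(X)-\lambda)$. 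For nonnegativity I would bring in the linearization of the soliton equation --- obtained by contracting $\Ric-g=\Hess f$ and applying Bochner --- namely that $\Lambda$ is exactly the set of $\mu$-mean-zero solutions of $\debar^{\,*_\mu}\debar u=u$; feeding this eigenvalue equation into a Weitzenb\"ock identity for holomorphic fields on the soliton, roughly of the form $\int_M|\nabla_i\nabla_j\theta_Y|^2\,d\mu=\lambda\cdot(\text{positive multiple of }\|\theta_Y\|_\mu^2)$, should force $\lambda\ge 0$, the term $\nabla_i\nabla_j\theta_Y$ being precisely the obstruction to $\desharp\overline{\theta_Y}$ being holomorphic. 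I expect this last point --- the sign of the eigenvalues --- to be the main obstacle: it is the only step genuinely requiring the full soliton equation rather than just $\nabla^2f=0$, and it is sensitive to the exact weighted integrations by parts; reality and diagonalizability, by contrast, are formal once $J\nabla f$ is seen to be Killing.

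For (ii) I would argue as in the last assertion of Theorem~\ref{thm:calabi}. With $b_1(M)=0$, the Hodge decomposition of the dual of a real holomorphic field $Z$ reads $Z=\nabla h+J\nabla p$, $h,p$ real, with $J\nabla p$ Killing; correspondingly $Z^{(1,0)}=\desharp h+\desharp(ip)$, so already $\lie h(M)$ is the $\R$-direct sum of the $\desharp$-images of real and of purely imaginary functions, with trivial intersection (a potential that is both real and purely imaginary being constant). It remains to check that this restricts to $\lie k_0(M)=\ker\mathcal X$: if $Y=Z^{(1,0)}\in\lie k_0(M)$ then $[J\nabla f,Z]=0$, and since $J\nabla f$ is Killing and holomorphic one has $[J\nabla f,\nabla h]=\nabla\big((J\nabla f)h\big)$ and $[J\nabla f,J\nabla p]=J\nabla\big((J\nabla f)p\big)$, a gradient plus a $J$-gradient summing to zero; as gradients and $J$-gradients are $L^2(M,\omega_g^n)$-orthogonal (because $J\nabla(\cdot)$ is divergence-free), both vanish, so $(J\nabla f)h$ and $(J\nabla f)p$ are constant and, $J\nabla f$ being divergence-free, zero. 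Hence $\desharp h,\desharp(ip)\in\lie k_0(M)$, giving $\lie k_0(M)=\lie k_0'(M)\oplus\lie k_0''(M)$ as claimed.
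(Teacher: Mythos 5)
The paper does not prove this statement at all: it is quoted from Tian--Zhu \cite{tz_uniq}, so your proposal has to be measured against the standard argument there. The first half of your plan is sound as far as it goes: realizing $\lie h(M)$ as $\mu$-normalized holomorphy potentials, computing $\theta_{[X,Y]}=(X-\bar X)(\theta_Y)=-i(J\nabla f)(\theta_Y)$, and noting that $\nabla_i\nabla_jf=0$ makes $J\nabla f$ Killing so that $\operatorname{ad}(X)$ is Hermitian for $\langle\cdot,\cdot\rangle_\mu$, does give diagonalizability with real eigenvalues. But the theorem asserts that only $\lambda\ge 0$ occurs, and this is exactly the step you leave at the level of ``should force $\lambda\ge 0$'' and yourself flag as the main obstacle; as written your argument only yields $\lie h(M)=\bigoplus_{\lambda\in\R}\lie k_\lambda(M)$. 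The sign is not a formality and cannot follow from $J\nabla f$ being Killing (eigenvalues of a Hermitian operator can be negative); it needs the soliton equation through the weighted Bochner machinery you only allude to: first the characterization of potentials as eigenvalue-one eigenfunctions of the $\mu$-weighted Laplacian (itself a Bochner consequence of $\Ric-g=\Hess f$), and then a weighted integration by parts which, for $Y\in\lie k_\lambda$, produces an identity of the shape $\int_M|\nabla_i\nabla_j\theta_Y|^2\,d\mu=\lambda(1+\lambda)\,\|\theta_Y\|_\mu^2$ together with the bound $\lambda\ge-1$ coming from nonnegativity of the weighted Laplacian applied to the conjugate potential; only the combination excludes negative $\lambda$. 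So your guessed identity is of roughly the right shape (though not ``$\lambda$ times a fixed positive multiple''), but none of this is carried out, and it is the heart of the statement.

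The second half contains a step that is actually false. For a real holomorphic field $Z$ on a compact K\"ahler manifold with $b_1=0$ one does get $Z=\nabla h+J\nabla p$, but it is \emph{not} true that $J\nabla p$ is Killing, equivalently that $\desharp h$ and $\desharp(ip)$ are separately holomorphic: if that held for every $Z\in\lie h(M)$, then $\lie h(M)$ would always be the complexification of the Killing algebra, hence reductive, which fails for instance on the blow-up of $\CP^2$ at one point --- i.e.\ on the Koiso--Cao soliton in complex dimension two --- and indeed the presence of the $\lambda>0$ summands in \eqref{deco_KRS} is precisely the failure of your claim. Consequently ``so already $\lie h(M)$ is the $\R$-direct sum\dots'' is wrong, and the rest of your argument only shows $(J\nabla f)h=(J\nabla f)p=0$, i.e.\ that the two pieces would centralize $X$; it never shows they lie in $\lie h(M)$, which is the actual content of $\lie k_0(M)=\lie k_0'\oplus\lie k_0''$. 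The standard way to close this (and Tian--Zhu's) is again the potential equation: on the centralizer one has $X(\theta_Y)=\bar X(\theta_Y)=\tfrac12(\nabla f)(\theta_Y)$, so the equation characterizing holomorphy potentials has real coefficients there; hence the real and imaginary parts $h$ and $p$ of $\theta_Y$ are again potentials (the Matsushima-type reality trick), after which your orthogonality computation correctly places $\desharp h$ and $\desharp(ip)$ in $\lie k_0(M)$.
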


The following  result is due to Lichnerowicz (see \cite[Proposition 2.140]{besse}).

\begin{prop}
 \label{prop:lichnerowicz characterization}
On a compact K\"ahler manifold a (real) vector field $X$ is holomorphic if and only if 
\begin{align}
 \label{equa:lichnerowicz characterization}
 \Delta_d X^\flat - 2 \Ric (X,\, \cdot ) = 0\, .
\end{align}

\end{prop}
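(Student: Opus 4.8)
The plan is to reduce the identity to a Bochner--Kodaira formula on the holomorphic tangent bundle $E:=T^{1,0}M$. First I would recall that a real vector field $X$ is holomorphic, i.e. $\lieder_{X}J=0$, if and only if the endomorphism $\nabla X$ commutes with $J$; writing $X^{1,0}=\tfrac12(X-iJX)$, this is in turn equivalent to $\nabla_{\bar Z}X^{1,0}=0$ for every $(0,1)$-vector field $\bar Z$, i.e. to $X^{1,0}$ being a holomorphic section of $E$ — here one uses that $g$ is K\"ahler, so that the $\debar$-operator of $E$ is the $(0,1)$-part of the Levi-Civita connection. Since $M$ is compact, integration by parts turns this into the vanishing of $\debar^{*}\debar X^{1,0}$.

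Next I would apply the Bochner--Kodaira--Nakano identity to $E=T^{1,0}M$, whose Chern connection is the Levi-Civita connection: on sections it reads $2\,\debar^{*}\debar X^{1,0}=\nabla^{*}\nabla X^{1,0}-\Ric(X^{1,0})$, where $\nabla^{*}\nabla$ is the rough Laplacian, $\Ric$ is regarded as an endomorphism, and the curvature term arises as the trace $\Lambda\, i\Theta(T^{1,0}M)$, which equals the Ricci endomorphism of $(M,g)$. Now I would lower indices: the musical isomorphism is parallel, so it commutes with $\nabla^{*}\nabla$ and carries $X^{1,0}$ to the $(0,1)$-part $\xi^{0,1}$ of $\xi:=X^{\flat}$; using that on a K\"ahler manifold $\nabla^{*}\nabla$ preserves the type decomposition of forms and that $\Ric$ commutes with $J$, the identity becomes $2\,(\debar^{*}\debar X^{1,0})^{\flat}=(\nabla^{*}\nabla\xi)^{0,1}-\bigl(\Ric(X,\cdot)\bigr)^{0,1}$. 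Finally the Weitzenb\"ock formula on $1$-forms, $\Delta_{d}\xi=\nabla^{*}\nabla\xi+\Ric(X,\cdot)$, lets me trade $(\nabla^{*}\nabla\xi)^{0,1}$ for $(\Delta_{d}\xi)^{0,1}$ and obtain
\[
2\,(\debar^{*}\debar X^{1,0})^{\flat}=\bigl(\Delta_{d}\xi-2\Ric(X,\cdot)\bigr)^{0,1}.
\]
Since $\Delta_{d}\xi-2\Ric(X,\cdot)$ is a real $1$-form, it vanishes if and only if its $(0,1)$-part does, hence if and only if $\debar^{*}\debar X^{1,0}=0$, hence (compactness again) if and only if $\debar X^{1,0}=0$, hence if and only if $X$ is holomorphic; this is the assertion.

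I expect the only real difficulty to be the bookkeeping of conventions: one must normalize signs so that $\Lambda\, i\Theta(T^{1,0}M)$ is genuinely the Ricci endomorphism, compatibly with the convention $\rho(\cdot,\cdot)=\Ric(J\cdot,\cdot)$ of the Notation, and one must check the absence of cross terms in the splitting of $\nabla^{*}\nabla$ and the commutation of $\Delta_{d}$ with the type projections; each of these genuinely uses the K\"ahler hypothesis and picks up torsion corrections on a merely Hermitian manifold. A more elementary route, close to Lichnerowicz's original argument, bypasses the Bochner--Kodaira machinery: via $(\lieder_{X}J)(Y)=J\nabla_{Y}X-\nabla_{JY}X$, holomorphicity is the vanishing of the tensor $Y\mapsto\nabla_{JY}X-J\nabla_{Y}X$, and integrating its pointwise squared norm over $M$ and integrating by parts produces a constant multiple of $\int_{M}\langle\Delta_{d}\xi-2\Ric(X,\cdot),\,\xi\rangle$, which at once gives ``$\Delta_{d}\xi-2\Ric(X,\cdot)=0\Rightarrow X$ holomorphic'' together with the $L^{2}$ version of the converse; upgrading the converse to the stated pointwise identity still requires the K\"ahler curvature symmetry $R_{i\bar j k\bar l}=R_{k\bar j i\bar l}$, which is the true computational core.
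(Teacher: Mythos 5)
The paper offers no proof of this proposition at all: it is quoted as a classical result of Lichnerowicz with a pointer to \cite[Proposition 2.140]{besse}, so there is no internal argument to compare against. Your proof is correct and is essentially the standard derivation behind that reference: identifying holomorphicity of $X$ with $\debar$-closedness of $X^{1,0}$ as a section of $T^{1,0}M$ (legitimate because the Chern connection is Levi-Civita in the K\"ahler case), and combining the Bochner--Kodaira identity $2\,\debar^*\debar X^{1,0}=\nabla^*\nabla X^{1,0}-\Ric(X^{1,0})$ with the Weitzenb\"ock formula $\Delta_d X^\flat=\nabla^*\nabla X^\flat+\Ric(X,\cdot)$ to get $2\,(\debar^*\debar X^{1,0})^\flat=\bigl(\Delta_d X^\flat-2\Ric(X,\cdot)\bigr)^{0,1}$, from which both implications follow, compactness being needed only to pass from $\debar^*\debar X^{1,0}=0$ to $\debar X^{1,0}=0$. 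The one substantive point you flag, the sign and factor of the curvature term for $E=T^{1,0}M$, is correct as you wrote it; a quick consistency check is that the opposite sign would contradict the classical Bochner vanishing of holomorphic vector fields under negative Ricci curvature, while the factor $2$ matches $\Delta_{\debar}=\tfrac12\Delta_d$ on functions.
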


\section{Statements and proofs}

The function $f$ in Definition \ref{KRS} has, 
by means of Remark \ref{oss:holomorphypotentialofKRShasholomorphicgradient}, 
holomorphic gradient so it satisfies, applying $\delta$ on both sides of 
\eqref{equa:lichnerowicz characterization} (cf. \cite[(1.17.5)]{gaud}) 
  
\begin{equation} \label{Gaudf}
\frac 1 2 \Delta_d^2 f + (dd^c f, \omega + \frac 1 2 dd^c f) + \frac 1 2 (ds, df) =0 \; .
\end{equation}
 Tracing the KRS equation \eqref{KRSequation} we get $2n-s =  \Delta_d f$, 
 we substitute it into \eqref{Gaudf} to get 
 \begin{equation*}
\frac 1 2 \Delta_d (2n-s) - \Delta_d f + \frac 1 2 |dd^c f|^2 + \frac 1 2 (ds, df) =0 \; . 
 \end{equation*}
So
\begin{equation}\label{equa:tointegrate}
- \Delta_d s + 2(s-2n)+ |dd^c f|^2 + (ds, df) =0 \; . 
\end{equation}
Differentiating we get 
\begin{equation} \label{dM}
- \Delta_d ds + 2ds + d  |dd^c f|^2 +d (ds, df) =0\; .
\end{equation}

The last term in \eqref{dM} can be substituted with the following two lemmas.

\begin{lemma}\label{lemma:commutation}
On an extremal K\"ahler-Ricci soliton $(g,\, f)$ the holomorphic fields $\nabla f$ and $\nabla s$ commute.
\end{lemma}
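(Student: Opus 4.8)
The plan is to show that the Lie bracket $[\nabla f, \nabla s]$ vanishes by combining the soliton structure with extremality. The natural framework is Theorem \ref{thm:TZ}: since $(g,f)$ is a KRS with $(1,0)$-field $X = \desharp f$ (up to the usual identification, $X$ is the $(1,0)$-part of $\nabla f$ whose holomorphy potential is $f$), the algebra $\lie h(M)$ decomposes into eigenspaces $\lie k_\lambda(M)$ of $\operatorname{ad}_X$. The vector field $\desharp s$ is holomorphic by the extremality hypothesis (Definition \ref{defi:extremalmetric}), so it lies in $\lie h(M)$ and can be written as a sum of components $Y_\lambda \in \lie k_\lambda(M)$, i.e. $[X, Y_\lambda] = \lambda Y_\lambda$. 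To prove $[\nabla f, \nabla s] = 0$ it suffices to show that only the $\lambda = 0$ component is nonzero, i.e. that $\desharp s \in \lie k_0(M)$ is centralized by $X$.

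The key step is to rule out the positive eigenvalue contributions. I would proceed by the standard eigenvalue/integration trick: for each $\lambda > 0$, the component $Y_\lambda$ is itself the $\desharp$-image of a holomorphy potential, and the bracket relation $[X, Y_\lambda] = \lambda Y_\lambda$ translates, at the level of potentials, into a differential identity of the form $X(u_\lambda) = \lambda u_\lambda$ (up to constants) where $u_\lambda$ is the potential of $Y_\lambda$. Since $f$ is real and $X = \desharp f$, applying this to $\bar X$ as well and using that $\nabla f$ is a real vector field, one gets $\nabla f (u_\lambda) = \lambda u_\lambda + (\text{conjugate terms})$; pairing against $e^{-f}$ or simply integrating against the volume form and using $\int_M X(u_\lambda)\, e^{-f}\omega^n = 0$ (divergence with respect to the weighted measure for which $X$ is a gradient) forces $\lambda \int_M u_\lambda e^{-f} \omega^n = 0$, hence $u_\lambda$ is orthogonal to constants; iterating or combining with reality of $s$ should kill $Y_\lambda$ for $\lambda > 0$. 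An alternative, perhaps cleaner route: decompose the real holomorphic field $\nabla s$ according to Theorem \ref{thm:calabi} as well, noting that $g$ being extremal gives $\lie h(M) = \lie a(M) \oplus \lie h'(M)$ with $\desharp s$ generating the grading there, and then match the two gradings — the point being that $X = \desharp f$ must lie in the centralizer $\lie h_0(M)$ of $\desharp s$ (since by Theorem \ref{thm:calabi} that centralizer is the complexification of the Killing algebra, and one checks $\nabla f$ is, after subtracting a parallel part, Killing, or directly that $f$ has the symmetry forcing this), which by symmetry of the bracket gives $[\desharp s, \desharp f] = 0$.

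The hard part will be establishing that $\desharp f$ and $\desharp s$ actually sit in each other's centralizers rather than merely controlling one grading by the other; a priori the two gradings of $\lie h(M)$ coming from Theorems \ref{thm:TZ} and \ref{thm:calabi} need not be compatible. The cleanest argument is likely to show directly that $s$ is constant along the flow of $\nabla f$, equivalently that $\nabla f(s) $ is as symmetric as it needs to be: differentiating the traced soliton equation $\Delta_d f = 2n - s$ along $\nabla f$ and comparing with the extremal equation $\debar\desharp s = 0$ should yield $\nabla_{\nabla f}\nabla s = \nabla_{\nabla s}\nabla f$ as $(1,1)$-type identities, which is exactly vanishing of the bracket since both fields are real holomorphic and the torsion-free Levi-Civita connection gives $[\nabla f, \nabla s] = \nabla_{\nabla f}\nabla s - \nabla_{\nabla s}\nabla f$. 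So concretely I would: (i) record that both $\desharp f$ and $\desharp s$ are holomorphic; (ii) use Remark \ref{oss:holomorphypotentialofKRShasholomorphicgradient} that $\nabla_i\nabla_j f = 0$ and the analogous $\nabla_i\nabla_j s = 0$ from extremality to see that the full real Hessians of $f$ and $s$ are of type $(1,1)$; (iii) expand $[\nabla f, \nabla s]$ in these Hessians and observe the type-$(1,1)$ symmetry makes the two terms coincide. Step (iii) is the routine computation; step (ii)'s use of extremality in the form $\nabla_i\nabla_j s = 0$ — which is the honest content of $\debar\desharp s = 0$ — is the conceptual crux, and is also exactly why the extremality hypothesis (not just the KRS one) is needed.
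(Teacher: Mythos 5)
Your final, concrete plan (steps (i)--(iii)) has a genuine gap at step (iii). Knowing that both real Hessians are of type $(1,1)$ --- equivalently that $\nabla f$ and $\nabla s$ are real holomorphic --- does \emph{not} force $\Hess s(\nabla f,\cdot)=\Hess f(\nabla s,\cdot)$, i.e.\ it does not make the two terms of $[\nabla f,\nabla s]=\nabla_{\nabla f}\nabla s-\nabla_{\nabla s}\nabla f$ coincide. Counterexample: on $\CP^1$ with the round metric, take $u$ and $v$ to be two of the ambient coordinate functions restricted to $S^2$; both $\desharp u$ and $\desharp v$ are holomorphic (so both Hessians are $J$-invariant, $\nabla_i\nabla_j u=\nabla_i\nabla_j v=0$), yet $[\nabla u,\nabla v]\neq 0$, since under the isomorphism sending a real holomorphic field to its $(1,0)$-part these correspond to non-commuting elements of $\lie{sl}(2,\C)$. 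So commutation cannot be extracted from a pointwise type computation; it is a global, structural fact, and your step (iii) is exactly where the missing input should enter.

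Your ``alternative, perhaps cleaner route'' is in fact the paper's proof, but your justification for the key inclusion is off. The paper argues: $\desharp s,\desharp f\in\lie h(M)$ (extremality plus Remark \ref{oss:holomorphypotentialofKRShasholomorphicgradient}), and by Theorem \ref{thm:calabi} the centralizer $\lie h_0(M)$ of $\desharp s$ is the complexification of the Killing algebra; since it contains $\desharp f$, one gets $[\desharp s,\desharp f]=0$ and taking real parts gives $[\nabla s,\nabla f]=0$. The reason $\desharp f$ lies in that complexified Killing algebra is \emph{not} that ``$\nabla f$ is, after subtracting a parallel part, Killing'' --- a Killing gradient field is parallel, and $\nabla f$ is not Killing --- but that $J\nabla f$ is Killing: since $f$ is real, $\iota_{J\nabla f}\omega=-df$ is closed so $J\nabla f$ preserves $\omega$, and it preserves $J$ because $\nabla f$ is real holomorphic, hence it preserves $g$; then $\desharp f=\frac12(\nabla f-iJ\nabla f)$ sits in the complexification of the Killing algebra, i.e.\ in $\lie h_0(M)$. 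Your first route through Theorem \ref{thm:TZ} with the weighted measure $e^{-f}\omega^n$ is left too vague to assess (you never actually show the $\lambda>0$ components of $\desharp s$ vanish), and as written you explicitly elect the flawed route (i)--(iii) as the argument, so the proposal does not establish the lemma.
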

\begin{proof}
Since $(g, f)$ is an extremal K\"ahler-Ricci soliton, then both $\de^\sharp s$ and $\de^\sharp f$
are holomorphic vector fields, i.e. $\de^\sharp s ,\, \de^\sharp f \in \lie{h}(M)$. 
Also, by means of Theorem \ref{thm:calabi}, $\lie{h}(M)$ splits as 
$\lie{h}(M) = \lie{a}(M) \oplus \bigoplus_{\lambda \geq 0} \lie h_\lambda(M) $.
The summand $\lie h_0 (M)$,  the centralizer of $\de^\sharp s $ in $\lie{h}(M)$,
contains $\de^\sharp f$.
Hence we have 
\begin{align*}
 0&=[\de^\sharp s , \, \de^\sharp f] =\frac{1}{4}\left( [\nabla s,\, \nabla f] - [J\nabla s,\, J\nabla f]
 -i [\nabla s,\, J\nabla f] - i [J\nabla s,\, \nabla f]\right) \\
 &=\frac{1}{2}\left( [\nabla s,\, \nabla f] 
 -iJ [\nabla s,\, \nabla f] \right)\, ,
\end{align*}
and we take its real part to conclude.
\end{proof}

\begin{lemma} \label{lemmadg}
Whenever two real functions $u,v$ satisfy $[\nabla u \, , \nabla v] = 0$, then there holds
\begin{align*}
d(g(\nabla u, \nabla v)) = (\nabla_{\nabla u} \nabla v + \nabla_{\nabla v} \nabla u)^\flat 
= 2(\nabla_{\nabla u} \nabla v)^\flat  \; . 
\end{align*}
\end{lemma}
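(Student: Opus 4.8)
The plan is to establish both equalities by a pointwise computation: pair the $1$-form $d(g(\nabla u,\nabla v))$ against an arbitrary vector field $Z$ and use only the two defining properties of the Levi-Civita connection $\nabla$, namely metric compatibility and vanishing torsion. Note that the hypothesis $[\nabla u,\nabla v]=0$ will be needed only for the second equality; the first holds for any pair of functions.

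First I would expand, using metric compatibility,
\[
Z\big(g(\nabla u,\nabla v)\big) = g(\nabla_Z \nabla u,\nabla v) + g(\nabla u, \nabla_Z \nabla v).
\]
Then I would recognize each summand as a value of a real Hessian: $g(\nabla_Z\nabla u,\nabla v) = \Hess u(Z,\nabla v)$ and $g(\nabla u,\nabla_Z\nabla v) = \Hess v(Z,\nabla u)$, where $\Hess u=\tfrac12\lieder_{\nabla u}g$ is symmetric (equivalently, $\nabla^2 u$ is symmetric because $\nabla$ is torsion-free). Exchanging the two arguments then gives $\Hess u(Z,\nabla v) = \Hess u(\nabla v,Z) = g(\nabla_{\nabla v}\nabla u,Z)$ and similarly $\Hess v(Z,\nabla u) = g(\nabla_{\nabla u}\nabla v,Z)$, whence
\[
Z\big(g(\nabla u,\nabla v)\big) = g\big(\nabla_{\nabla u}\nabla v + \nabla_{\nabla v}\nabla u,\,Z\big)
\]
for every $Z$. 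This is exactly the first claimed identity $d(g(\nabla u,\nabla v)) = (\nabla_{\nabla u}\nabla v + \nabla_{\nabla v}\nabla u)^\flat$.

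For the second equality I would invoke the hypothesis together with the torsion-free identity $\nabla_X Y - \nabla_Y X = [X,Y]$: taking $X=\nabla u$, $Y=\nabla v$ gives $\nabla_{\nabla u}\nabla v - \nabla_{\nabla v}\nabla u = [\nabla u,\nabla v] = 0$, so the two terms in the sum coincide and it collapses to $2(\nabla_{\nabla u}\nabla v)^\flat$.

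I do not expect any genuine obstacle here: the statement is purely Riemannian and does not use the K\"ahler structure. The only points requiring care are the sign and ordering conventions in the definition of $\Hess$ adopted in the \emph{Notation}, and applying the torsion-free relation $\nabla_X Y-\nabla_Y X=[X,Y]$ in the correct direction so that the bracket $[\nabla u,\nabla v]$ enters with the right sign.
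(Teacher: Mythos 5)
Your proposal is correct and follows essentially the same route as the paper's proof: expand $Z(g(\nabla u,\nabla v))$ by metric compatibility, use the symmetry of the Hessian to rewrite both terms as $g(\nabla_{\nabla u}\nabla v+\nabla_{\nabla v}\nabla u, Z)$, and then use torsion-freeness with the commutation hypothesis to merge the two terms. Your version merely makes explicit (via the Hessian and the torsion-free identity) the steps the paper leaves implicit, and correctly observes that the hypothesis is needed only for the second equality.
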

\begin{proof}
For any vector field $Y$ we have
\begin{align*}
Y \cdot g(\nabla v, \nabla u)	&= g(\nabla_Y \nabla v, \nabla u) + g(\nabla v, \nabla_Y \nabla u) \\
						&= g( \nabla_{\nabla u} \nabla v + \nabla_{\nabla v} \nabla u, Y) \\
						&= g(2 \nabla_{\nabla u} \nabla v, Y) \; .
\end{align*}
This completes the proof of the lemma.
\end{proof}
For a (K\"ahler-)Ricci soliton there are some quantities that are constant, see e.g. \cite{chow}. One of them is, in our notation,
\begin{equation} \label{eqchow}
s + |\nabla f|^2 + 2f = \const.
\end{equation}
From this together with Lemma \ref{lemmadg} it is easy to infer the following.
\begin{lemma}Let $g$ be a KRS with real holomorphic field $X$ and let $Z = X^{1,0}$.
Then $g$ is extremal if, and only if, $\nabla_X X$ is real holomorphic (or $\nabla_Z Z$ is holomorphic or $\nabla_{\bar Z} \bar Z$ is antiholomorphic).
\end{lemma}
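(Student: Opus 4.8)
The plan is to extract the Riemannian gradient $\nabla s$ explicitly from the conserved quantity \eqref{eqchow} and then observe that extremality of $g$ amounts to this expression being a (real) holomorphic vector field. By Definition \ref{KRS} we take the soliton field to be $X = \nabla f$, so that $Z = X^{1,0} = \desharp f$, and by Remark \ref{oss:holomorphypotentialofKRShasholomorphicgradient} the field $X$ is real holomorphic. Differentiating \eqref{eqchow} gives $ds + d|\nabla f|^2 + 2\,df = 0$, and applying Lemma \ref{lemmadg} with $u = v = f$ (the bracket $[\nabla f,\nabla f]$ vanishing trivially) yields $d|\nabla f|^2 = 2(\nabla_{\nabla f}\nabla f)^\flat = 2(\nabla_X X)^\flat$. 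Taking the sharp we obtain
\begin{equation*}
\nabla s = -2\,\nabla_X X - 2\,X .
\end{equation*}

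Since the real holomorphic vector fields on $M$ form a real linear subspace of all vector fields (a real field $V$ being real holomorphic iff its $(1,0)$-part lies in $\lie h(M)$, and $V\mapsto V^{1,0}$ being real-linear and injective with image $\lie h(M)$), and since $X = \nabla f$ already belongs to this subspace, the displayed identity shows that $\nabla s$ is real holomorphic if and only if $\nabla_X X$ is. By Definition \ref{defi:extremalmetric} the former is exactly the extremality of $g$, which gives the first stated equivalence. To pass to the $(1,0)$ formulations I would use the Kähler fact that the Levi-Civita connection commutes with $J$, so $\nabla_Z Z$ is again of type $(1,0)$, and that a holomorphic $(1,0)$-field has vanishing $(0,1)$-covariant derivative, i.e. $\nabla_{\bar W}Z = 0$ for all $W$ and conjugately $\nabla_W\bar Z = 0$. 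Writing $X = Z + \bar Z$ and expanding, the mixed terms drop and $\nabla_X X = \nabla_Z Z + \nabla_{\bar Z}\bar Z$ with $(\nabla_X X)^{1,0} = \nabla_Z Z$ and $\nabla_{\bar Z}\bar Z = \overline{\nabla_Z Z}$. Hence $\nabla_X X$ is real holomorphic $\iff$ $\nabla_Z Z$ is a holomorphic $(1,0)$-field $\iff$ $\nabla_{\bar Z}\bar Z$ is antiholomorphic, completing the chain of equivalences.

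There is essentially no serious obstacle: the lemma is a bookkeeping consequence of \eqref{eqchow} and Lemma \ref{lemmadg}. The only points demanding a little care are the linearity of the ``real holomorphic'' condition, which lets one discard the harmless summand $X = \nabla f$ when reading off holomorphicity of $\nabla s$, and the type decomposition of $\nabla_X X$ on a Kähler manifold, for which one invokes $\debar Z = 0$ together with the fact that $\nabla$ preserves the bidegree.
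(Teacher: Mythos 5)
Your proposal is correct and is exactly the argument the paper intends: the authors state the lemma without proof, remarking only that it follows from the conserved quantity \eqref{eqchow} together with Lemma \ref{lemmadg}, which is precisely how you derive $\nabla s = -2\nabla_X X - 2X$ and conclude via real-linearity of the holomorphicity condition. Your additional bookkeeping for the $(1,0)$ formulations ($\nabla_X X = \nabla_Z Z + \overline{\nabla_Z Z}$, using that $\nabla$ preserves types and $\nabla_{\bar W} Z = 0$ for holomorphic $Z$ on a K\"ahler manifold) is also correct.
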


At this point it is worth noticing the following.
\begin{prop}For an extremal KRS $g$ with field $X$ and scalar curvature $s$, if $X = c \nabla s$ then $g$ is Einstein.
\end{prop}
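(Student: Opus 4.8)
The plan is to combine the soliton constant \eqref{eqchow} with the extremality hypothesis $X = c\nabla s$ to deduce that $s$ is constant, after which the soliton equation \eqref{KRSequation} immediately gives that $g$ is Einstein. First I would recall that $X$ is the real holomorphic field of the KRS, so by Definition \ref{KRS} we may write $X = \nabla f$ for a real potential $f$ (the normalization in \eqref{KRSequation}); the hypothesis $X = c\nabla s$ then forces $\nabla f = c\nabla s$, hence $f = cs + a$ for some constant $a$. The case $c = 0$ is immediate (then $X = 0$ and the soliton is Einstein by \eqref{KRSequation}), so assume $c \neq 0$.

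Next I would substitute $f = cs + a$ into the Ricci-soliton identity \eqref{eqchow}, which reads $s + |\nabla f|^2 + 2f = \const$. This gives
\[
s + c^2 |\nabla s|^2 + 2cs = \const,
\]
i.e. $|\nabla s|^2 = \frac{1}{c^2}\bigl(\const - (1+2c)s\bigr)$, so $|\nabla s|^2$ is an affine function of $s$. I would also apply the operator $d$ to equation \eqref{equa:tointegrate} in the form rewritten via $2n - s = \Delta_d f = c\,\Delta_d s$, or more directly observe that differentiating $|\nabla s|^2 = A + Bs$ gives $2(\nabla_{\nabla s}\nabla s)^\flat = B\,ds$, using Lemma \ref{lemmadg} (with $u = v = s$, whose gradient trivially commutes with itself). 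This shows that along each integral curve of $\nabla s$ the function $s$ satisfies a second-order ODE whose solutions together with the affine relation for $|\nabla s|^2$ can only be consistent, on a compact manifold, if $\nabla s \equiv 0$: at a maximum point $p$ of $s$ one has $\nabla s(p) = 0$, forcing $A + Bs(p) = 0$, and then the ODE $\frac{d}{dt}s = $ (controlled by $|\nabla s|^2$) keeps $s$ pinned to the value $s(p)$.

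The cleanest way to close the argument, which I would prefer, is to use \eqref{equa:tointegrate} together with $f = cs + a$: substituting and integrating over $M$, the terms $\Delta_d s$ and $(ds,df) = c\,|\nabla s|^2 = c|ds|^2$ combine with $|dd^c f|^2 = c^2|dd^c s|^2$ to produce, after an integration by parts using $\int_M \Delta_d s = 0$, an identity of the form $\int_M \bigl( c^2 |dd^c s|^2 + (\text{nonneg. multiple of }|ds|^2)\bigr) = \int_M 2(2n-s) = 2\int_M \Delta_d f = 0$, whence $dd^c s = 0$ and $ds = 0$. Therefore $s$ is constant, so $\Delta_d f = 2n - s$ is constant; since $f$ is then (a constant multiple of a constant, hence) has vanishing gradient, \eqref{KRSequation} reduces to $\Ric = g$, i.e. $g$ is K\"ahler-Einstein.

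The main obstacle I anticipate is verifying that the sign of the coefficient multiplying $|ds|^2$ in the integrated identity is nonnegative (or at least that no cancellation of signs can occur), which amounts to checking that $1 + 2c > 0$ is forced, or handling the complementary range of $c$ separately via the maximum-principle argument sketched above; the interplay between the scaling conventions in \eqref{KRSeq}–\eqref{KRSequation} and the constant in \eqref{eqchow} is where care is needed.
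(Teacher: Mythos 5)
Your first, ``affine relation'' route is in substance the paper's own proof: the paper differentiates the conserved quantity \eqref{eqchow} and, using Lemma \ref{lemmadg}, gets a pointwise relation of the form $(c+2)X+2\nabla_XX=0$, which it evaluates at a maximum of $|X|^2$, treating the degenerate proportionality constant separately; you instead substitute $f=cs+a$ directly into \eqref{eqchow} to get $|\nabla s|^2=A+Bs$ and evaluate at a critical point of $s$. Same mechanism, same dichotomy (your $B=0$, i.e.\ $1+2c=0$, is the paper's $c=-2$ in its reciprocal normalization). Two repairs are needed to make your version a proof. First, the ``ODE keeps $s$ pinned'' step is not an argument: the relation $|\nabla s|=\sqrt{A+Bs}$ is non-Lipschitz exactly at the zeros, so pinning/uniqueness is precisely what has to be shown. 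The clean fix, which costs one line, is to evaluate $|\nabla s|^2=A+Bs$ at both a maximum and a minimum point of $s$ (both exist by compactness): if $B\neq0$ this gives $s_{\max}=s_{\min}$, while if $B=0$ the evaluation at the maximum gives $A=0$, hence $\nabla s\equiv0$; either way $s$ is constant, so $f=cs+a$ is constant and \eqref{KRSequation} gives $\Ric=g$. Second, like the paper you should dispose of the possibility that $c$ is a function (it is then holomorphic and real, hence constant) before writing $f=cs+a$.

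Your ``preferred'' integral route, by contrast, has a genuine gap which you flag but do not close. Substituting $f=cs+a$ into \eqref{equa:tointegrate} and integrating (using $\int_M\Delta_d s=0$ and $\int_M 2(s-2n)=-2\int_M\Delta_d f=0$) yields exactly $\int_M\bigl(c^2|dd^cs|^2+c|ds|^2\bigr)=0$, and the coefficient of $|ds|^2$ is $c$ itself; nothing in your argument rules out $c<0$ (nor is $1+2c>0$ ``forced'' by anything you establish), so the two terms can have opposite signs and no conclusion follows in that range. Since the pointwise argument above needs no sign information at all, it should be the main proof rather than the fallback.
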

\begin{proof} We first notice that $c$ has to be constant on $M$. Indeed if it were a function on $M$ it would be holomorphic since the two fields are.
Consider the function $p \mapsto |X_p|^2$ and a local maximum $q \in M$. At $q$ we would have
\[
 g_q ( \nabla_X X |_q, X_q) =0.
\]
Under the proportionality assumption \eqref{eqchow} becomes 
\begin{equation} \label{chowc}
(c+2) X + 2 \nabla_X X = 0.
\end{equation}
At $q$ we would have then $\frac{c+2} 2 g_q (X, X) = 0$ which implies $X = 0$ if $c \neq -2$.

If $c = -2$ we have from \eqref{chowc} and Lemma \ref{lemmadg} that $\nabla_X X = \nabla |X|^2 =0$ implying $X=0$ as well.
\end{proof}

 By means of the decomposition theorems \ref{thm:calabi} and \ref{thm:TZ}, the fields $JX$ and $J \nabla s$ belong to the center of the isometry algebra and are linearly independent for a non-Einstein extremal KRS. This gives us the following corollary.
 
\begin{corol} If $g$ is a non-Einstein extremal KRS, then the center of the isometry group of $g$ has dimension at least $2$. 
\end{corol}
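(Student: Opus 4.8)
The plan is to exhibit two linearly independent Killing fields of $(M,g)$ — namely $JX$ and $J\nabla s$ — that both lie in the center of the Lie algebra $\lie{iso}(M,g)$ of Killing fields; since $\Iso(M,g)$ is a compact Lie group, the dimension of the center of the group equals that of the center of $\lie{iso}(M,g)$, and the conclusion follows. Equivalently, two central Killing fields integrate to commuting one-parameter groups of isometries sweeping out an (at least) two-dimensional torus in the center of $\Iso_0(M,g)$.

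First I would check that $JX$ and $J\nabla s$ are Killing. On a compact K\"ahler manifold every Killing field is holomorphic (a standard Bochner-type identity shows it satisfies \eqref{equa:lichnerowicz characterization}, so Proposition \ref{prop:lichnerowicz characterization} applies), and, conversely, rotating a gradient-type real holomorphic field by $J$ produces a Killing field: for $K=\nabla f$ the KRS equation \eqref{KRSequation} gives $\Hess f=\Ric-g$, which is $J$-invariant (equivalently $\nabla_i\nabla_j f=0$, Remark \ref{oss:holomorphypotentialofKRShasholomorphicgradient}), and a short computation then yields $\lieder_{J\nabla f}g=0$; the same argument applies to $\nabla s$, holomorphic by extremality (Definition \ref{defi:extremalmetric}). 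So $JX=J\nabla f$ and $J\nabla s$ are Killing — these are precisely the (Killing) real vector fields corresponding under $\desharp$ to the purely imaginary holomorphy potentials $if$ and $is$, which sit in $\lie k_0''(M)$ and $\lie h_0(M)$ of Theorems \ref{thm:TZ} and \ref{thm:calabi} respectively.

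Next, linear independence, which is where the non-Einstein hypothesis is used. If $JX=c\,J\nabla s$ for some real constant $c$ (allowing either field to vanish), then $X=c\nabla s$ and the Proposition immediately preceding the statement forces $g$ to be Einstein. The degenerate cases are handled the same way: if $\nabla s\equiv 0$ then $g$ is cscK and, tracing \eqref{KRSequation}, $\Delta_d f=2n-s=\const$, which integrates to $\Delta_d f=0$, hence $f$ is constant and $\Ric=g$; if $X\equiv 0$ then $f$ is constant and again $\Ric=g$. Thus for a non-Einstein extremal KRS the span of $JX$ and $J\nabla s$ is two-dimensional.

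The heart of the matter — and the step I expect to be the main obstacle — is centrality. For $J\nabla s$ it follows at once from the last sentence of Theorem \ref{thm:calabi}: $\lie h_0(M)$, the centralizer of $\desharp s$, is the complexification of $\lie{iso}(M,g)$, so every Killing field $K$ commutes with $\nabla s$, whence (as $K$ is holomorphic) $[K,J\nabla s]=J[K,\nabla s]=0$ and $J\nabla s\in Z(\lie{iso}(M,g))$. For $JX$ one uses the analogous feature of the soliton structure carried by Theorem \ref{thm:TZ} (see \cite{tz_uniq}): the isometry algebra equals $\lie k_0''(M)$ and hence lies in $\lie k_0(M)$, the centralizer of $X$; thus $[K,X]=0$ for every Killing $K$, so $[K,JX]=J[K,X]=0$ and $JX\in Z(\lie{iso}(M,g))$ as well. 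Combining the three steps, $Z(\lie{iso}(M,g))$ — and therefore the center of $\Iso(M,g)$ — has dimension at least $2$. The only real care needed throughout is bookkeeping with the conventions relating $X$, $Z=X^{1,0}$, $JX$ and the real versus purely imaginary holomorphy potentials, and keeping the real Lie bracket of vector fields distinct from the bracket on $(1,0)$-fields.
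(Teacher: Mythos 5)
Your proposal is correct and follows essentially the same route as the paper, whose entire justification is the remark immediately preceding the corollary: by the decomposition theorems (Theorems \ref{thm:calabi} and \ref{thm:TZ}) the Killing fields $JX$ and $J\nabla s$ lie in the center of the isometry algebra, and by the preceding proposition they are linearly independent in the non-Einstein case. You have simply filled in the details (the Killing property of $J\nabla f$ and $J\nabla s$, the degenerate cases in the independence argument, and the centrality via $\lie h_0(M)$ and $\lie k_0''(M)$) that the paper leaves implicit.
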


We now present a characterization of extremal K\"ahler-Ricci solitons.

\begin{prop}\label{prop:characterizationextremalkrs}
Let $(M^{2n}, g, \omega, f)$ be a compact K\"ahler-Ricci soliton with Riemannian scalar curvature $s$.  Let $X= \nabla f$.
Then the following are equivalent.
\begin{enumerate}
\item \label{ddccost} the function $|dd^cf|^2$ is constant and $[\nabla f, \nabla s]=0$;
\item \label{extr} $g$ is extremal;
\item \label{curvaturetwospots} $\Rm(\cdot, \bar{\desharp f}) \bar{\desharp f} = 0$; 
\item \label{comm&forma11} The tensor 
\[
T_X:= \Rm(\cdot, X) X
\]
commutes with $J$ and $\alpha: (A, B) \mapsto \Rm(A, JX, X, B)$ is a $(1,1)$-form.
\end{enumerate}
\end{prop}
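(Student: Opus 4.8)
The plan is to establish $(2)\Rightarrow(1)\Rightarrow(2)$ and then the two further equivalences $(2)\Leftrightarrow(3)$ and $(3)\Leftrightarrow(4)$. \emph{Proof of $(1)\Leftrightarrow(2)$.} The starting point is the already-derived identity \eqref{dM}. Suppose $g$ is extremal. Applying Proposition \ref{prop:lichnerowicz characterization} to $\nabla s$ gives $\Delta_d\, ds = 2\Ric(\nabla s,\cdot)$, and rewriting $\Ric = g + \Hess f$ via the soliton equation \eqref{KRSequation} turns this into $\Delta_d\, ds = 2\,ds + 2(\nabla_{\nabla s}\nabla f)^\flat$. On the other hand Lemma \ref{lemma:commutation} gives $[\nabla f,\nabla s]=0$, so Lemma \ref{lemmadg} applied to $u=s$, $v=f$ gives $d(ds,df) = d\bigl(g(\nabla s,\nabla f)\bigr) = 2(\nabla_{\nabla s}\nabla f)^\flat$. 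Substituting both expressions into \eqref{dM} makes the Hessian/curvature terms cancel, leaving $d|dd^cf|^2=0$; together with $[\nabla f,\nabla s]=0$ this is exactly (1). Conversely, assume (1): then $d|dd^cf|^2$ drops out of \eqref{dM}, and Lemma \ref{lemmadg} (which uses only $[\nabla f,\nabla s]=0$) rewrites what remains as $\Delta_d\, ds - 2\Ric(\nabla s,\cdot)=0$. By Proposition \ref{prop:lichnerowicz characterization} this means $\nabla s$ is holomorphic, i.e.\ (2).

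\emph{Proof of $(2)\Leftrightarrow(3)$.} Work in local holomorphic coordinates. Extremality is equivalent to the vanishing of the $(0,2)$-part of the Hessian of $s$, i.e.\ $\nabla_{\bar k}\nabla_{\bar m}s=0$ for all $\bar k,\bar m$. Since $\nabla f$ is real holomorphic we have $\nabla_i\nabla_j f = 0$ and, conjugating, $\nabla_{\bar i}\nabla_{\bar j}f=0$. Writing the soliton equation as $R_{i\bar j}=g_{i\bar j}-\nabla_i\nabla_{\bar j}f$ (where $R_{i\bar j}$ are the components of $\Ric$) and commuting one covariant derivative past another, one obtains, up to the sign conventions fixed in the Notation, $\nabla_{\bar m}R_{i\bar j} = R_{i\bar m n\bar j}\, V^n$, where $V=\desharp f$ is the $(1,0)$-part of $\nabla f$, so that $V^n = g^{n\bar p}\nabla_{\bar p}f$. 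Tracing this identity gives $\nabla_{\bar m}s = R_{n\bar m}V^n$, and since $\nabla_{\bar k}V^n = g^{n\bar p}\nabla_{\bar k}\nabla_{\bar p}f = 0$, one further covariant derivative yields $\nabla_{\bar k}\nabla_{\bar m}s = R_{n\bar k p\bar m}\, V^n V^p$. Hence $g$ is extremal if and only if $R_{n\bar k p\bar m}V^nV^p=0$ for all $\bar k,\bar m$, which, taking conjugates and using the symmetries of the Kähler curvature tensor, is exactly $\Rm(\cdot,\bar{\desharp f})\bar{\desharp f}=0$, that is (3).

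\emph{Proof of $(3)\Leftrightarrow(4)$.} Decompose the real field $X=\nabla f$ into types, $X = V + \bar V$ with $V = \desharp f$, and use the standard Kähler facts that $\Rm(\xi,\eta)=0$ whenever $\xi,\eta$ are both of type $(1,0)$ or both of type $(0,1)$, and that $\Rm$ of a mixed pair preserves the type splitting of $TM\otimes\C$. Expanding $T_X(A)=\Rm(A,X)X$ accordingly, the components of $T_X$ that fail to commute with $J$ are exactly the two conjugate tensors $\Rm(\cdot,\bar V)\bar V$ (on $(1,0)$-vectors) and $\Rm(\cdot,V)V$ (on $(0,1)$-vectors), and these same two tensors are precisely the $(2,0)$- and $(0,2)$-parts of the bilinear form $\alpha$. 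Therefore (4) holds if and only if both of these tensors vanish; being conjugate to one another, this is equivalent to $\Rm(\cdot,\bar V)\bar V=0$, i.e.\ (3). Conversely (3) makes both vanish and hence implies (4).

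The step I expect to be the real obstacle is $(2)\Leftrightarrow(3)$: the covariant-derivative commutator, the contraction producing the Ricci tensor, and the factors relating real and complex Hessians must all be carried out precisely in the conventions fixed in the Notation, where a stray sign or factor of $2$ is easy to introduce. By contrast $(1)\Leftrightarrow(2)$ is bookkeeping with Lemmas \ref{lemma:commutation} and \ref{lemmadg}, Proposition \ref{prop:lichnerowicz characterization} and the soliton equation through \eqref{dM}, and $(3)\Leftrightarrow(4)$ only requires tracking which type-components of the curvature survive.
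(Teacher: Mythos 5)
Your argument is correct, and apart from the $(1)\Leftrightarrow(2)$ step (which coincides with the paper's: compare \eqref{dM} with the tensorial Lichnerowicz equation via Lemma \ref{lemma:commutation}, Lemma \ref{lemmadg} and the soliton equation) it follows a genuinely different route. For $(2)\Leftrightarrow(3)$ the paper never computes in coordinates: it first proves the unnumbered lemma that a KRS is extremal if and only if $\nabla_{\bar Z}\bar Z$ is antiholomorphic (where $Z=\desharp f$), a fact it extracts from the conserved quantity \eqref{eqchow} together with Lemma \ref{lemmadg}, and then notes that for $(1,0)$-fields $A$ one has $\Rm(A,\bar Z)\bar Z=\nabla_A\nabla_{\bar Z}\bar Z$, so that (3) is literally the antiholomorphy of $\nabla_{\bar Z}\bar Z$. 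You instead differentiate the soliton equation in holomorphic coordinates, commute derivatives to get $\nabla_{\bar m}R_{i\bar j}=R_{i\bar m n\bar j}V^n$, trace to $\nabla_{\bar m}s=R_{n\bar m}V^n$ and differentiate once more using $\nabla_{\bar k}V^n=0$; this is the standard gradient-soliton identity computation, it is valid (the sign and factor ambiguities you flag are harmless because only the vanishing of $\nabla_{\bar k}\nabla_{\bar m}s$ is at stake), and it bypasses both the conserved quantity and the intermediate lemma, at the cost of the index bookkeeping you yourself anticipate. For $(3)\Leftrightarrow(4)$ the paper argues through the real system \eqref{rmreale}, whereas your type decomposition is cleaner and even yields the sharper observation that $[T_X,J]=0$ alone is equivalent to (3). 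The one point you should make explicit there: ``$\alpha$ is a $(1,1)$-form'' includes skew-symmetry, which the paper derives separately from \eqref{rmreale}; in your language you must add that the mixed part $\alpha(W_1,\bar W_2)$ is automatically skew by the pair-exchange symmetry of $\Rm$, while the $(2,0)$-part, which equals $-i\,g(\Rm(W_1,\bar V)\bar V,W_2)$ with $V=\desharp f$, is symmetric, so its vanishing (together with its conjugate) is exactly what makes $\alpha$ a $(1,1)$-form rather than merely a $J$-invariant bilinear form. With that one line added, your proof of the proposition is complete.
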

\begin{proof}
Let us first prove the equivalence between (\ref{ddccost}) and (\ref{extr}).
By means of Proposition \ref{prop:lichnerowicz characterization}, 
the condition on $g$ being extremal is equivalent to require the Riemannian scalar curvature $s$ 
to satisfy the tensorial Lichnerowicz equation
(see \cite[Proposition 2.140]{besse})
\begin{equation} \label{Lich}
\Delta_d ds - 2 \Ric(\nabla s, \cdot)= 0\; .
\end{equation}
Let us assume $g$ to be extremal. Equation \eqref{Lich}, together with the K\"ahler-Ricci soliton assumption
$\Ric = g+ \Hess f$, reads
 \begin{align}
0&=\Delta_d ds - 2 ds - 2 \Hess_f(\nabla s, \cdot)	 \nonumber\\
&=\Delta_d ds - 2 ds - 2 g(\nabla_{\nabla s} \nabla f, \cdot) \; .  \label{equa:lichtensons}
\end{align}
By means of Lemma \ref{lemmadg}, formula \eqref{equa:lichtensons} differs from \eqref{dM} 
by the term $d | dd^c f|^2$ which has to be zero.

Conversely, assuming $[\nabla s ,\, \nabla f]=0$, then Lemma \ref{lemmadg} holds. 
Also, in \eqref{dM} the term $d|dd^c f|^2$ vanishes and then \eqref{dM} is simply \eqref{Lich},
which says, by means of Proposition \ref{prop:lichnerowicz characterization}, that $s$ has holomorphic gradient.
This completes the equivalence between (\ref{ddccost}) and (\ref{extr}).

Let $g$ be extremal, then by means of the previous Lemma, the field $\nabla_Z Z$ where $Z = \desharp f$ is holomorphic.
Then compute for any $(1,0)$-field $A$,
\begin{align*}
\Rm(A, \bar Z) \bar Z		&= \nabla_A \nabla_{\bar Z} \bar Z - \nabla_{\bar Z} \nabla_A \bar Z - \nabla_{[\bar Z, A]} \bar Z \\
					&= 0
\end{align*}
by using the fact that $\bar Z$ and $\nabla_{\bar Z} \bar Z$ are antiholomorphic (hence killed by $\nabla_A$) that kills the first two terms and that $[\bar Z, A]$ is $(1,0)$ that kills the last.

Conversely, the generic form of the Riemann tensor for $A$ of type $(1,0)$ is
\[
\Rm(A, \bar Z) \bar Z = \nabla_A \nabla_{\bar Z} \bar Z.
\]
If this is zero, it means that the field $\nabla_{\bar Z} \bar Z$ is killed by $\nabla_A$ for any $A$ of type $(1,0)$ implying that it is antiholomorphic. Indeed, for any $W$ we have
\[
0=\nabla_{Y-iJY}(W+ iJW) = \nabla_Y W + J \nabla_{JY} W + i(J \nabla_Y W - \nabla_{JY} W)
\]
that is, $W$ satisfies $\nabla_{JY} W = J \nabla_Y W$ for all $Y$, that is $W$ is real holomorphic.

Hence we conclude $g$ is extremal by means of the previous Lemma.

Let us now assume \eqref{curvaturetwospots}. We notice that its real formulation is given by the system
\begin{equation} \label{rmreale}
\begin{cases}
\Rm(A, X) JX = - \Rm(A, JX) X \\
\Rm(A, X) X = \Rm(JA, X) JX.
\end{cases}
\end{equation}

and the second equation means exactly that $[T_X, J]=0$.
We have now, using \eqref{rmreale} for the second equality,
\begin{align*}
\alpha(B, A)	&= \Rm(B, JX, X, A) \\
			&= -\Rm(B, X, JX, A) \\
			&= -\Rm(JX, A, B, X) \\
			&= - \alpha(A, B).
\end{align*}

To prove that $\alpha$ is $(1,1)$ is equivalent to prove that it is $J$-invariant. This follows again from \eqref{rmreale} since
\begin{align*}
\alpha(JA, JB)	&= \Rm(JA, JX, X, JB) \\
			&= - \Rm(A, X, JX, B) \\
			&= \Rm(A, JX, X, B) \\
			&= \alpha(A, B).
\end{align*}

Conversely let $[T_X, J] = 0$ and let $\alpha$ be $J$-invariant. These assumption are exactly \eqref{rmreale}.
\end{proof}

We can use this to prove our main result.
\begin{thm} \label{thmprinc}
Any extremal K\"ahler-Ricci soliton with positive holomorphic sectional curvature is Einstein.
\end{thm}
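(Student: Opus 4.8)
The plan is to deduce from the characterization in Proposition~\ref{prop:characterizationextremalkrs} that the soliton potential $f$ must be constant; the K\"ahler-Ricci soliton equation~\eqref{KRSequation} then immediately reduces to $\Ric = g$, so that $g$ is K\"ahler-Einstein.

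First I would put $Z = \desharp f$ and invoke part~(\ref{curvaturetwospots}) of Proposition~\ref{prop:characterizationextremalkrs}: since $(g,f)$ is an extremal KRS, one has the vector-valued identity $\Rm(A,\bar Z)\bar Z = 0$ for every $(1,0)$-field $A$. Specializing $A = Z$ gives $\Rm(Z,\bar Z)\bar Z = 0$, and pairing this with $Z$ while using the standard symmetries of $\Rm$ (the pair symmetry together with the antisymmetry of $\Rm(\cdot,\cdot)$ in its first two slots) collapses it to the pointwise scalar identity $\Rm(Z,\bar Z,Z,\bar Z) = 0$ on all of $M$.

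Next I would interpret this scalar. Writing $Z = \tfrac12(\nabla f - iJ\nabla f)$ and expanding $\Rm(Z,\bar Z,Z,\bar Z)$ multilinearly, it is a positive constant multiple of $|\nabla f|^4$ times the holomorphic sectional curvature of the $J$-invariant real $2$-plane spanned by $\nabla f$ and $J\nabla f$, at every point where $\nabla f \neq 0$. Hence, at each point of $M$, either $\nabla f$ vanishes or the holomorphic sectional curvature in the direction of $\nabla f$ is zero. Under the hypothesis of positive holomorphic sectional curvature the second alternative never occurs, so $\nabla f$ vanishes on the open set where it could be nonzero; by connectedness $\nabla f \equiv 0$, so $f$ is constant and $\Hess f = 0$. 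Then~\eqref{KRSequation} reads $\Ric = g$, which proves the theorem.

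I do not expect a serious obstacle once Proposition~\ref{prop:characterizationextremalkrs} is in hand, since all the real content has been absorbed into the equivalence between extremality and the vanishing of $\Rm(\cdot,\bar{\desharp f})\bar{\desharp f}$. The only point that demands some care is the sign-and-constant bookkeeping identifying $\Rm(Z,\bar Z,Z,\bar Z)$ with a positive multiple of the holomorphic sectional curvature along $\nabla f$: this is precisely what makes the positivity assumption bite pointwise and rules out a nonzero $\nabla f$.
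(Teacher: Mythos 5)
Your proposal is correct and follows essentially the same route as the paper: both invoke the equivalence of extremality with $\Rm(\cdot,\bar{\desharp f})\bar{\desharp f}=0$ from Proposition~\ref{prop:characterizationextremalkrs} and evaluate the holomorphic sectional curvature in the direction of $\desharp f$, so that positivity forces $\nabla f$ to vanish pointwise (the paper phrases this as a contradiction with a normalized $Z$, while you keep the $|\nabla f|^4$ factor explicit, which makes the pointwise dichotomy and the final step $\Hess f=0\Rightarrow\Ric=g$ slightly more explicit than the paper's one-line conclusion). The only cosmetic remark is that no connectedness argument is needed: positivity of the holomorphic sectional curvature kills $\nabla f$ at every point directly.
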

\begin{proof}
Let $f$ be the soliton function. Assume it is not a constant and  $Z$ be the normalized  $\desharp f$. 

By assumption we have, in the direction $Z$, that the holomorphic sectional curvature is
\[
K(Z):= \Rm(Z, \bar Z, Z, \bar Z) > 0.
\]
By means of the previous proposition we are lead to the contradiction $K(Z) = 0$ as the above Riemann tensor vanishes.
\end{proof}

\begin{oss} \label{rmkBerger}
There is no loss of generality to assume the positivity of the holomorphic 
sectional curvature instead of just requiring it to have a sign. 
Indeed, by a theorem of Berger \cite[Lemme (7.4) pag. 50]{bergerholsec} prescribing the sign of the holomorphic sectional curvature gives the same sign to the scalar curvature, which in case of Ricci solitons is always positive by means of general results (see e.g. again \cite{chow}).
\end{oss}

The argument exposed in this paper can be replicated verbatim to prove the following result about Sasaki manifolds. We refer to \cite{FOW, canonical} for the notions of Sasaki-Ricci solitons, to and Sasaki-extremal metrics and for transverse curvature. Recall that, for a Sasaki manifold, being transversally K\"ahler-Einstein is equivalent to being $\eta$-Sasaki-Einstein.

\begin{thm}
Any extremal Sasaki-Ricci soliton with positive transverse holomorphic sectional curvature is $\eta$-Sasaki-Einstein.
\end{thm}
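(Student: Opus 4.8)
The plan is to transcribe the K\"ahler argument of this paper into the transverse K\"ahler structure of a Sasaki manifold $(M^{2n+1}, g, \eta, \xi, \Phi)$. First I would recall that a Sasaki metric is encoded by its transverse K\"ahler metric $g^T$ on the local leaf space of the Reeb foliation $\mathcal F_\xi$, and that all the relevant curvature notions (transverse Ricci tensor $\Ric^T$, transverse scalar curvature $s^T$, transverse holomorphic sectional curvature) are defined via $g^T$. The Sasaki-Ricci soliton equation is $\Ric^T - (2n+2) g^T = \tfrac12 \lieder_{\nabla^T f} g^T$ for a basic function $f$ (i.e.\ $\xi f = 0$), and the Sasaki-extremal condition is that $\nabla^T s^T$ is a transversally holomorphic (basic) vector field, equivalently $\debar_B \de_B^\sharp s^T = 0$ for the basic Dolbeault operators. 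The key point, which I would state as a lemma, is that the transverse Lichnerowicz-type identity and the Futaki-type decomposition theorems (the analogues of Theorems \ref{thm:calabi} and \ref{thm:TZ}, available in \cite{FOW, canonical}) hold for basic vector fields in exactly the same form.

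Granting that, the proof of Proposition \ref{prop:characterizationextremalkrs} goes through verbatim at the level of basic forms and transverse covariant derivatives: the commutation lemma \ref{lemma:commutation} shows $[\nabla^T f, \nabla^T s^T] = 0$ on an extremal Sasaki-Ricci soliton, Lemma \ref{lemmadg} is a purely formal statement about the transverse Levi-Civita connection, and the conserved quantity \eqref{eqchow} has a transverse analogue $s^T + |\nabla^T f|^2_{g^T} + 2f = \const$ coming from the same soliton identity. Running the same chain of substitutions then yields that $\nabla^T_{Z} Z = 0$ along $Z = \desharp_B f$, which by the generic form of the transverse Riemann tensor on $(1,0)$-vectors forces $\Rm^T(\cdot, \bar Z)\bar Z = 0$, and in particular the transverse holomorphic sectional curvature $K^T(Z) = \Rm^T(Z, \bar Z, Z, \bar Z)$ vanishes. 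Combined with the positivity hypothesis this is a contradiction unless $f$ is constant, i.e.\ unless $\lieder_{\nabla^T f} g^T = 0$, which says the transverse K\"ahler metric is transversally K\"ahler-Einstein; by the remark preceding the statement, this is equivalent to $(M, g)$ being $\eta$-Sasaki-Einstein.

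I would also note that Berger's sign lemma (Remark \ref{rmkBerger}) and the positivity of transverse scalar curvature for Sasaki-Ricci solitons both have transverse counterparts, so there is no loss of generality in assuming the transverse holomorphic sectional curvature is positive rather than merely sign-definite. The main obstacle is not any new computation but the bookkeeping: one must check that every operator used in Section 2 ($\Delta_d$, $\delta$, $dd^c$, $\Hess$, the musical isomorphisms, and the Dolbeault splitting) descends to, or has a well-defined basic analogue on, the space of basic forms, and that the two structural decomposition theorems really are available in the Sasaki literature in the stated generality. Once this dictionary is fixed, the argument is word-for-word the K\"ahler one applied to $g^T$, which is why the paper can assert it holds \emph{verbatim}; a careful writeup would simply insert the superscript ${}^T$ and the adjective ``basic'' throughout and cite \cite{FOW, canonical} for the foundational facts.
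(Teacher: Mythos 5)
Your proposal is correct and follows essentially the same route as the paper, which itself only asserts that the K\"ahler argument transfers verbatim to the transverse geometry, citing the Sasakian analogues of the two decomposition theorems and the transverse Lichnerowicz identities, exactly the dictionary you set up. One small correction: the chain of substitutions does not give $\nabla^T_Z Z = 0$ but only that $\nabla^T_Z Z$ is transversally holomorphic, which is still enough to conclude $\Rm^T(\cdot, \bar Z)\bar Z = 0$ and hence the vanishing of $K^T(Z)$, contradicting positivity unless $f$ is constant.
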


Indeed there are Sasakian analogues of Theorem \ref{deco_extr} done by Boyer and Galicki and an extension of Theorem \ref{thm:TZ} done by the second author in \cite{petrecca}. Moreover, the Lichnerowicz equations hold as well for the transverse quantities, see again \cite{canonical}.

\bibliography{../biblio}{}
\bibliographystyle{amsplain}
\end{document}